\newcommand{\mc}[1]{\ensuremath{\mathcal{#1}}} 
\newcommand{\EE}{{\mathbb E}}
\definecolor{highlight}{rgb}{1,0.2,0.2}
\newtheorem{open}{Open Question}
\def\BBox{\kern  -0.2cm\hbox{\vrule width 0.2cm height 0.2cm}}
\newtheorem{theorem}{Theorem}[section]
\begin{document}

\title{Prescribing Symmetries and Automorphisms for Polytopes}
\author{
Egon Schulte\thanks{Partially supported by Simons Foundation award no. 420718.}
\thanks{Email: e.schulte@northeastern.edu},\\
Northeastern University,\\
Department of Mathematics,\\
Boston, MA 02115, USA
\\[.25in]
Pablo Sober\'{o}n\thanks{Partially supported by NSF Grant DMS 1851420} \thanks{Email: pablo.soberon-bravo@baruch.cuny.edu},\\
Baruch College, City University of New York,\\
Department of Mathematics,\\
New York, New York 10010, USA
\\[.25in]
Gordon Ian Williams
\thanks{Email:\ giwilliams@alaska.edu}\\
University of Alaska Fairbanks\\
Department of Mathematics and Statistics,\\
Fairbanks, AK 99709, USA}

\maketitle

\begin{abstract}
We study finite groups that occur as combinatorial automorphism groups or geometric symmetry groups of convex polytopes. When $\Gamma$ is a subgroup of the combinatorial automorphism group of a convex $d$-polytope, $d\geq 3$, then there exists a convex $d$-polytope related to the original polytope with combinatorial automorphism group {\it exactly\/} $\Gamma$. When $\Gamma$ is a subgroup of the geometric symmetry group of a convex $d$-polytope, $d\geq 3$, then there exists a convex $d$-polytope related to the original polytope with both geometric symmetry group and combinatorial automorphism group {\it exactly\/} $\Gamma$. These symmetry-breaking results then are applied to show that for every abelian group $\Gamma$ of even order and every involution $\sigma$ of $\Gamma$, there is a centrally symmetric convex polytope with geometric symmetry group $\Gamma$ such that $\sigma$ corresponds to the central symmetry.
\end{abstract}

\noindent
{\it Keywords: convex polytope, abstract polytope, automorphism group, symmetry group.}\\[.05in]
{\it Subject classification:\ Primary 52B15; Secondary 52B11, 51M20}

\section{Introduction}\label{intro}

The study of convex polytopes is largely motivated by their symmetries.  With every convex polytope $P$ are associated two finite groups:\ the (geometric) symmetry group $G(P)$ consisting of the Euclidean isometries of the ambient space that preserve $P$, and the (combinatorial) automorphism group $\Gamma(P)$ consisting of the combinatorial symmetries of the face lattice of~$P$. It is natural to ask about whether or not the converse is true:\ is every finite group the symmetry group or automorphism group of a convex polytope?

For automorphism groups this question was answered positively by Schulte and Williams \cite{SWil}, and later a simpler proof was found by Doignon \cite{Do1}. In this paper we are studying variations of this question with additional restrictions imposed on the polytopes in question. We are particularly interested in \textit{centrally symmetric\/} convex polytopes in Euclidean $d$-space~$\mathbb{E}^d$. By definition these admit the reflection in the origin, $x\mapsto -x$, as a geometric symmetry and thus have an automorphism group (as well as symmetry group) that contains an  involution. The main motivation for this paper was to characterize the pairs $(\Gamma,\sigma)$, consisting of a finite group $\Gamma$ and an involution $\sigma$ in $\Gamma$, with the property that $\Gamma$ is the automorphism group of a \textit{centrally symmetric} convex polytope such that $\sigma$ corresponds to the central symmetry. In Theorem~\ref{thm3} we show that every abelian group $\Gamma$ of even order has the desired property:\ for every involution $\sigma$ in $\Gamma$ there is a centrally symmetric polytope with automorphism group $\Gamma$ such that $\sigma$ acts like the central symmetry.

Along the way we generalize the methods of \cite{SWil} to establish the following two symmetry-breaking results for arbitrary convex polytopes, which are applicable in a wider context and are of independent interest. When $\Gamma$ is a subgroup of the automorphism group of some convex $d$-polytope $Q$, $d\geq 3$, then there exists a convex $d$-polytope $P$ related to~$Q$ with automorphism group {\it exactly\/} $\Gamma$. When $\Gamma$ is a subgroup of the geometric symmetry group of some convex $d$-polytope~$Q$, $d\geq 3$, then there exists a convex $d$-polytope $P$ related to~$Q$ with both symmetry group and automorphism group {\it exactly\/} $\Gamma$. Our symmetry-breaking constructions are described in Section~\ref{section-preassigning} and generalize to some extent to abstract polytopes (see \cite{McMSch02}).  In Section \ref{section-involutions} we investigate centrally symmetric polytopes. Finally, in Section~\ref{section-openprobs} we discuss some open problems and point to recent solutions.

The question of finding polytopes with prescribed automorphism group has also been asked as motivated by representation theory, see \cite{Lad16, BL18, FL18}.  These articles study orbit polytopes, that is, convex hulls of single  point orbits under finite groups acting affinely on a real vector space. In this context it is natural to additionally consider the ``affine symmetry group'' (sometimes also called the ``affine automorphism group'') of a convex polytope, consisting of all non-singular affine transformations of the ambient space that preserve the polytope. As not every finite group is the affine automorphism group of an orbit polytope, it seems that symmetry-breaking processes as described here cannot be completely avoided to settle the above problem.  

The question whether or not a given group is the automorphism group or symmetry group of a geometric, combinatorial, algebraic, or topological structure of a specified kind has been studied quite extensively. For a recent article describing the common characteristics of the approaches see the recent article \cite{Jon18} by Jones.

\section{Basic Notions}

We begin by recalling some basic definitions from the theory of convex and abstract polytopes (see~\cite{Gru03,McMSch02,Zie95}).

An {\it abstract polytope\/} of rank $d$ is a ranked poset $\mathcal{P}$ with the following properties. The elements of $\mathcal{P}$ are called {\em faces\/}, and the possible face ranks are $-1,0,\ldots,d$. A face is a $j$-{\it face} if its rank is $j$. Faces of ranks 0, 1 or $d-1$ are also called {\it vertices}, {\it edges} or {\it facets} of $\mathcal{P}$, respectively. The poset $\mathcal{P}$ has a smallest face (of rank $-1$) denoted $F_{-1}$ and a largest face (of rank $d$) denoted $F_d$. Each {\it flag} (maximal totally ordered subset) $\Phi$ of $\mathcal{P}$ contains exactly $d+2$ faces, one for each rank~$j$. Two flags are said to be {\it adjacent} if they differ in just one face; they are {\em $j$-adjacent\/} if this face has rank $j$. The poset $\mathcal{P}$ is {\it strongly flag-connected}, meaning that any two flags $\Phi$ and $\Psi$ can be joined by a sequence of flags $\Phi=\Phi_0,\Phi_1,...,\Phi_k=\Psi$, all containing $\Phi\cap\Psi$, such that any two successive flags $\Phi_{i-1}$ and $\Phi_i$ are adjacent. Finally, $\mathcal{P}$ satisfies the {\it diamond condition}:\  whenever $F\leq G$, with ${\rm rank}(F)=j-1$ and ${\rm rank}(G)=j+1$, there are exactly two faces $H$ of rank $j$ such that $F\leq H\leq G$. Thus, for $j=0,\ldots,d-1$, a flag of $\mathcal{P}$ has exactly one $j$-adjacent flag. 

If $F$ and $G$ are faces with $F \leq G$, then $G/F := \{H \mid F \leq H \leq G\}$ is called a {\em section\/} of~$\mathcal{P}$. This is a polytope in its own right. For a face $F$, we also call $F_{d}/F$ the {\em co-face\/} of $\mathcal{P}$ at~$F$, or the {\em vertex-figure\/} of $\mathcal{P}$ at $F$ if $F$ is a vertex.  

The face lattice of a convex polytope is an example of an abstract polytope. Recall that a {\em convex polytope} $P$ is the convex hull of finitely many points in Euclidean $d$-space $\mathbb{E}^{d}$. A (proper) {\em face\/} of a convex $d$-polytope $P$ is the intersection of $P$ with a supporting hyperplane of $P$; the latter is a hyperplane $H$ in $\mathbb{E}^d$ such that $P$ lies entirely in one of the two closed half-spaces bounded by $H$ and has points in common with $H$. The empty set $\emptyset$, and $P$ itself, are also called (improper) faces of $P$. The set of all (proper and improper) faces of a convex polytope $P$, ordered by inclusion, forms a lattice called the {\em face lattice\/} of $P$. This is an abstract polytope, of rank $d$ if $P$ has dimension $d$. The \emph{boundary complex} of a convex $d$-polytope $P$, denoted ${\rm bd}(P)$, is the set of all faces of $P$ of rank less than $d$, partially ordered by inclusion (see~\cite[p. 40]{Gru03}); this complex tessellates the boundary $\partial P$ of $P$ and is topologically a $(d-1)$-sphere. 

Recall that a convex $d$-polytope is called {\it simple\/} if all its vertices have valency $d$, and {\it simplicial\/} if all its facets are $(d-1)$-simplices.

Let $P$ be a convex $d$-polytope. The (standard) \emph{barycentric subdivision} of $P$ is the geometric simplicial complex of dimension $d$, whose $d$-simplices are precisely the convex hulls of the centroids of the non-empty faces in a flag of $P$ (see \cite{Bay88}, \cite[p.~642]{GooORo04} or \cite[Sect. 2C]{McMSch02}). We use the term ``barycentric subdivision'' more broadly and allow the centroid of a face to be replaced by a relative interior point of that face. Thus, a barycentric subdivision of $P$ is a $d$-dimensional geometric simplicial complex with one vertex in the relative interior of each non-empty face of $P$, and with one $d$-dimensional simplex per flag of $P$, such that the vertices of a $d$-simplex are precisely the relative interior points chosen in the faces of the corresponding flag. Each barycentric subdivision of $P$ is isomorphic (as an abstract simplicial complex) to the order complex of the face lattice of $P$ (with the empty face removed); in particular, any two barycentric subdivisions are isomorphic.

There is a similar notion of barycentric subdivision for the boundary complex of a convex polytope. By $\mathcal{C}(P)$ we denote the barycentric subdivision of the boundary complex ${\rm bd}(P)$ of~$P$. This is a $(d-1)$-dimensional simplicial complex.

The order complex of an abstract polytope $\mathcal{P}$ similarly can be viewed as a ``combinatorial barycentric subdivision'' of $\mathcal{P}$ (see~\cite[Sect. 2C]{McMSch02}).

The {\it $k$-skeleton\/} ${\rm skel}_{k}(\mathcal{P})$ of an abstract polytope $\mathcal{P}$ is the poset consisting of all proper faces of $\mathcal{P}$ of rank at most $k$ (together with the induced partial order).

\section{Preassigning symmetry groups}\label{section-preassigning}

We begin this section with the following theorem about symmetry-breaking in convex polytopes.

\begin{theorem}
\label{thm1}
Let $d\geq 3$, let $Q$ be a convex $d$-polytope with (combinatorial) automorphism group $\Gamma(Q)$, and let $\Gamma$ be a subgroup of $\Gamma(Q)$. Then there exists a finite abstract $d$-polytope~$\mathcal{P}$ with the following properties:\\
(a)\ $\Gamma(\mathcal{P})=\Gamma$.\\
(b)\ $\mathcal{P}$ is isomorphic to a face-to-face tessellation $\mathcal{T}$ of the $(d-1)$-sphere $\mathbb{S}^{d-1}$ by spherical convex $(d-1)$-polytopes.\\
(c)\ ${\rm skel}_{d-2}(\mathcal{C}(Q))$ is a subcomplex of ${\rm skel}_{d-2}(\mathcal{P})$.\\
(d)\ If $\Gamma$ is a subgroup of the (geometric) symmetry group $G(Q)$ of $Q$, then the tessellation $\mathcal{T}$ on $\mathbb{S}^{d-1}$ in part (b) can be chosen in such a way that $G(\mathcal{T})=\Gamma=\Gamma(\mathcal{T})$.
\end{theorem}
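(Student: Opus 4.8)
The plan is to realize $\mathcal{P}$ as an explicit face-to-face tessellation $\mathcal{T}$ of $\mathbb{S}^{d-1}$ obtained by \emph{decorating the interiors of the chambers of the barycentric subdivision of $\partial Q$}. First I would set things up equivariantly: translate $Q$ so that $0$ lies in its interior, and, in the situation of part~(d), to its centroid, so that $G(Q)\le O(d)$. Let $\mathcal{C}:=\mathcal{C}(Q)$ be the standard barycentric subdivision of ${\rm bd}(Q)$, with centroids of faces as new vertices; then $\mathcal{C}$ is $G(Q)$-invariant. Since the vertices of the barycentric simplex of a flag $(F_0<\cdots<F_{d-1})$ are the centroids $c(F_0),\dots,c(F_{d-1})$, all lying in the affine hull of the facet $F_{d-1}$ — a hyperplane missing $0$ — this simplex projects radially to a genuine spherical $(d-1)$-simplex. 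So $\mathcal{C}$ may be viewed as a face-to-face tessellation of $\mathbb{S}^{d-1}$ by spherical simplices, with $G(Q)$ acting by isometries and $\Gamma(Q)$ by combinatorial automorphisms; moreover $\Gamma(Q)$, hence $\Gamma$, acts freely on the chambers (top simplices) of $\mathcal{C}$. Pick a chamber from each $\Gamma$-orbit $\mathcal{O}_1,\dots,\mathcal{O}_k$ as a representative.

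Next, for each $i$ I would fix a \emph{gadget} $\mathcal{G}_i$: a face-to-face subdivision of an abstract $(d-1)$-simplex into spherical convex $(d-1)$-polytopes, built so that (i) all new vertices lie in the relative interior of the simplex — e.g. first cone the simplex from an interior point, which splits it into $d$ pyramids \emph{without} refining its boundary, and then refine the pyramid interiors by further geodesic hyperplane cuts; (ii) $\mathcal{G}_i$ has no nontrivial automorphism fixing the boundary simplex, and in fact distinguishes the $d$ boundary vertices by an internal pattern near the $j$th of them encoding $j$; (iii) $\mathcal{G}_1,\dots,\mathcal{G}_k$ are pairwise non-isomorphic, arranged by letting the patterns also encode $i$; and (iv) every new vertex of $\mathcal{G}_i$ has larger valence than any vertex of $\mathcal{C}$. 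This is feasible for $d\ge 3$ because coning and hyperplane cuts preserve spherical convexity and face-to-faceness. I then install $\mathcal{G}_i$ in the representative chamber of $\mathcal{O}_i$ (identifying boundaries) and $\gamma(\mathcal{G}_i)$ in the chamber $\gamma\sigma$ for $\gamma\in\Gamma$; this is well defined by freeness, installations in adjacent chambers agree on the common (unsubdivided) facet, and the result is a face-to-face tessellation $\mathcal{T}$ of $\mathbb{S}^{d-1}$ by spherical convex $(d-1)$-polytopes. Let $\mathcal{P}$ be its face lattice, a finite abstract $d$-polytope. This yields (b); since every chamber boundary of $\mathcal{C}$ survived untouched, ${\rm skel}_{d-2}(\mathcal{C}(Q))$ is literally a subcomplex of ${\rm skel}_{d-2}(\mathcal{P})$, giving (c); and $\Gamma$ acts on $\mathcal{P}$ by automorphisms — isometrically in case~(d) — so $\Gamma\le\Gamma(\mathcal{P})$, respectively $\Gamma\le G(\mathcal{T})$.

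It then remains to prove $\Gamma(\mathcal{P})\le\Gamma$. Given $\phi\in\Gamma(\mathcal{P})$, property~(iv) forces $\phi$ to permute the vertices of $\mathcal{C}$; one checks that all of $\mathcal{C}$, together with the rank of each of its faces as a face of $\mathcal{C}(Q)$, is recoverable combinatorially from $\mathcal{P}$ (inside $\mathcal{P}$ the link of a vertex $c(F_j)$ of $\mathcal{C}$ still carries the join decomposition of that link in $\mathcal{C}$, whose factor dimensions pin down $j$), so $\phi$ induces a rank-preserving automorphism $\bar\phi$ of $\mathcal{C}(Q)$, which therefore lies in $\Gamma(Q)$. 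Since $\phi$ maps the gadget installed in a chamber $\sigma$ isomorphically onto the one in $\bar\phi(\sigma)$, property~(iii) forces $\sigma$ and $\bar\phi(\sigma)$ into the same $\Gamma$-orbit, whence $\bar\phi\in\Gamma$ by a short computation with the free action. Finally $\phi\mapsto\bar\phi$ is injective: if $\bar\phi=1$ then $\phi$ fixes $\mathcal{C}$, hence every chamber boundary, pointwise, so it restricts to a boundary-fixing — hence, by~(ii), trivial — automorphism of each gadget, giving $\phi=1$. Thus $\Gamma(\mathcal{P})=\Gamma$, proving (a); and in case~(d), $G(\mathcal{T})\le\Gamma(\mathcal{T})=\Gamma(\mathcal{P})=\Gamma\le G(\mathcal{T})$ forces equality throughout.

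The hard part will be the gadget construction together with the recognizability argument: producing gadgets that are simultaneously spherically realizable, rigid relative to their boundary, rank-aware, and pairwise distinguishable, and — more delicately — verifying that the undecorated complex $\mathcal{C}(Q)$, with its ranking, is combinatorially recognizable inside the decorated complex $\mathcal{P}$, which is exactly what forces an arbitrary automorphism of $\mathcal{P}$ down into $\Gamma(Q)$ before the gadget labels cut it further down to $\Gamma$.
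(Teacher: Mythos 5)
Your overall strategy matches the paper's:\ take the barycentric subdivision $\mathcal{C}(Q)$ of ${\rm bd}(Q)$, replace chambers (flag-simplices) by ``gadget'' subdivisions that leave the chamber boundary untouched, use a valence gap to force any automorphism of $\mathcal{P}$ to permute the vertices of $\mathcal{C}(Q)$, use pairwise non-isomorphic gadgets on distinct $\Gamma$-orbits to force the induced map down into $\Gamma$, and use rigidity of each gadget relative to its boundary to get injectivity. The paper's gadgets are affine images of Schlegel diagrams of a cross-polytope with pyramidal and simplicial Schlegel diagrams inserted; this makes spherical realizability and face-to-faceness automatic and produces, as a side effect, a clean valence arithmetic.

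There is, however, a genuine gap in your rank-recovery step, which you yourself flag as the ``more delicate'' point. You propose to read off the rank $j$ of $c(F_j)$ from the join decomposition of its link in $\mathcal{C}$, arguing that the ``factor dimensions pin down $j$''. They do not:\ the link of $c(F_j)$ in $\mathcal{C}(Q)$ is the join of $\Delta(\partial F_j)$ and $\Delta({\rm lk}\,F_j)$, with dimensions $j-1$ and $d-2-j$, so the unordered pair of factor dimensions determines only the set $\{j,\,d-1-j\}$. Already in $d=3$ this fails to separate label~$0$ (vertices of $Q$) from label~$2$ (facets of $Q$), which is exactly the distinction you need in order to push $\bar\phi$ into $\Gamma(Q)$ acting as face-lattice automorphisms. (There is also the secondary issue that the link of $c(F_j)$ in $\mathcal{P}$ is not the link in $\mathcal{C}$ but its refinement by the surrounding gadgets, so even the factorization itself would need an argument.) The paper avoids this entirely by letting the corner vertices $u_0,\dots,u_{d-1}$ of each gadget acquire valences $m_i$ chosen so large and so widely separated that the resulting valences of $\mathcal{C}(Q)$-vertices of label $i$ fall into pairwise disjoint intervals $M_i$; then labels are read off from valence alone. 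Note that because you made the \emph{interior} gadget vertices the high-valence ones (the reverse of the paper's arrangement), you have foreclosed that mechanism for yourself; you would instead need to actually invoke your condition (iii), the ``internal pattern near the $j$th corner encoding $j$'', and verify it survives adjacency constraints across chambers. That is doable and amounts to a different concrete instantiation of the same idea, but as written the proof sketch leans on the link-join claim, and that claim is false.

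A smaller point:\ you should also justify that the cone-plus-hyperplane-cut gadgets can be realized so that (a) no cut introduces new vertices or edges on the chamber boundary, (b) the cone apex and cut points can be given arbitrarily large valence while all boundary vertices of the chamber gain only a uniformly bounded number of edges per incident chamber, and (c) the pieces are spherically convex. The paper sidesteps all three by using Schlegel diagrams and radial projection, which is why that choice is not merely cosmetic.
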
 

First notice that the conclusion of the theorem above may fail for $d=2$, since the combinatorial automorphism group of a finite abstract $2$-polytope (polygon) is necessarily dihedral and so in particular cannot be cyclic. Hence we must require $d\geq 3$.
\bigskip

\begin{proof} 
We begin with the second and third parts of the theorem, then settle the first part, and later refine our arguments to settle the fourth part. Our strategy is to refine the structure of the given convex polytope $Q$ in such a way that all automorphisms in $\Gamma(Q)$ outside $\Gamma$ are destroyed. The result will be a spherical abstract polytope whose automorphism group is given by $\Gamma$. This symmetry-breaking process is interesting in its own right.
\smallskip

{\sf Parts~(b,c).}\ Consider the (standard or any other) barycentric subdivision $\mathcal{C}(Q)$ of the boundary complex ${\rm bd}(Q)$ of $Q$ in d-space $\EE^d$. This is a simplicial $(d-1)$-complex that refines ${\rm bd}(Q)$ and is a realization of the order complex of ${\rm bd}(Q)$ (see~\cite[Sect. 2C]{McMSch02}). Its simplices correspond to the chains (totally ordered subsets) in the poset ${\rm bd}(Q)$, with the chambers (maximal simplices) corresponding to the flags of 
${\rm bd}(Q)$; here, inclusion of simplex faces in $\mathcal{C}(Q)$ corresponds to inclusion of chains in ${\rm bd}(Q)$. In particular, $\mathcal{C}(Q)$ has the structure of a {\em labelled\/} simplicial complex, in which every simplex is labelled by the set of ranks of the faces in the chain of ${\rm bd}(Q)$ represented by the simplex. Thus the vertices of $\mathcal{C}(Q)$ can be labelled by~$0,\ldots,d-1$. The vertices of $Q$ are exactly the vertices of $\mathcal{C}(Q)$ with label $0$. The vertices of each chamber are labelled $0,\ldots,d-1$ such that no two vertices have the same label. Note that $\Gamma(Q)$ and hence $\Gamma$ act on $\mathcal{C}(Q)$ as groups of automorphisms of a labelled simplicial complex (labels of simplices are preserved), and that the action on the chambers is free.

As in the proof of \cite[Theorem~1]{SWil}, a key step in the construction consists of chamber replacement by complexes made up of Schlegel diagrams of convex polytopes. These complexes are inserted into the chambers of $\mathcal{C}(Q)$ in such a way that the $(d-2)$-skeleton ${\rm skel}_{d-2}(\mathcal{C}(Q))$ of $\mathcal{C}(Q)$ stays intact, unrefined. In fact, our proof basically consists of adapting the proof of \cite[Theorem~1]{SWil} to the more general situation at hand. (In the proof of that theorem, the corresponding subgroup $\Gamma$ acted simply vertex-transitively on a special convex $d$-polytope $Q$ constructed from a suitable permutation representation of $\Gamma$. In the present context, $Q$ can be an arbitrary $d$-polytope and $\Gamma$ need not act vertex-transitively.)

The complexes inserted into the chambers are constructed in exactly the same manner as in \cite{SWil}. We will review the properties of these complexes below. Each complex is built from a Schlegel diagram $\mathcal{D}$ of a $d$-crosspolytope supported on a $(d-1)$-dimensional simplex $D$ with vertices $u_{0},\ldots,u_{d-1}$, by inserting affine images of Schlegel diagrams of certain convex $d$-polytopes (the polytopes $R_i$ and $L$ described below) into the $(d-1)$-simplices of $\mathcal{D}$ that correspond to certain facets of the $d$-crosspolytope. The resulting $(d-1)$-dimensional complex, which as in \cite{SWil} is denoted $\mathcal{R}^L$, is also supported on $D$ and has the boundary complex of $D$ as a subcomplex. The particular choice of the polytopes $R_i$ and $L$ is quite delicate and is taken in such a way that the vertices $u_{0},\ldots,u_{d-1}$ of the outer simplex $D$ acquire very high valencies in $\mathcal{R}^L$ compared with the vertices in the interior of $D$, and that the valencies of these vertices in $\mathcal{R}^L$ are integers ``very far apart'' from each other. These conditions on the insertion process later prevent the existence of unwanted automorphisms. In particular, $\mathcal{R}^L$~itself will have no automorphism other than the trivial automorphism.

Before moving on to the actual chamber insertion process we briefly review the construction and properties of the complexes $\mathcal{R}^L$. First recall that the Schlegel diagram $\mathcal{D}$ of a $d$-crosspolytope consists of an {\em outer\/} $(d-1)$-simplex $D$, tiled in a face-to-face manner by $(d-1)$-simplices, the {\em simplex tiles\/} of~$\mathcal{D}$. Among these simplex tiles is a {\em central\/} $(d-1)$-simplex $Z$, corresponding to the facet of the crosspolytope opposite to the facet defining $D$. The simplices $D$ and $Z$ have no vertices in common. The simplex tiles of $\mathcal{D}$ adjacent to $Z$ (i.e., intersecting $Z$ in a common facet) share precisely one vertex with $D$; conversely, every vertex $u$ of $D$ is a vertex of precisely one simplex tile, $F_u$ (say), that is adjacent to $Z$. In the course of the construction we often require affine images of Schlegel diagrams supported on $(d-1)$-simplices. Clearly, any affine transformation that carries the supporting $(d-1)$-simplex of a Schlegel-diagram to another $(d-1)$-simplex, also carries the Schlegel diagram on the first simplex to a ``diagram'' on the second simplex (this also is a Schlegel diagram of some polytope). This is true no matter how the vertices of the first $(d-1)$-simplex are assigned by the affine transformation to the vertices of the second. 

The next step is to modify $\mathcal{D}$ in such a way that the vertices in the outer simplex $D$ acquire very high valencies compared with those in the interior, and that the valencies of the vertices of $D$ are very far apart from each other. To this end, consider the simplex tiles $F_{u_0},\ldots,F_{u_{d-1}}$ of $\mathcal{D}$ determined by the vertices $u_{0},\ldots,u_{d-1}$ of $D$, and replace every simplex tile $F_{u_i}$ by an affine image of the Schlegel diagram of a suitable convex $d$-polytope $R_i$. All vertices of this polytope $R_i$, save one, have small valencies but the exceptional vertex (which is mapped to $u_i$) has valency given by a large integer $m_i$ to be determined. For example, for~$R_i$ we could take the pyramid over a {\it simple\/} convex $(d-1)$-polytope with $m_i$ vertices and with at least one facet which is a simplex; then $R_i$ itself has a simplex facet, with the apex of $R_i$ as a vertex of valency $m_i$ in $R_i$. Suppose $R_i$ is a pyramid of this kind. Then $R_i$ admits a Schlegel diagram $\mathcal{R}_i$ whose outer $(d-1)$-simplex corresponds to a simplex facet of $R_i$ containing the apex of $R_i$. In this diagram, the outer vertex representing the apex has valency $m_i$ while all other vertices have (small) valency $d$. Now take an affine transformation that maps the outer simplex of $\mathcal{R}_i$ to the simplex tile $F_{u_i}$ of $\mathcal{D}$ such that  the vertex corresponding to the apex is mapped to $u_i$, and then insert the corresponding affine image of the Schlegel diagram $\mathcal{R}_i$ into the simplex $F_{u_i}$ such that $F_{u_i}$ becomes the outer simplex. If this procedure is performed for each $i=0,\ldots,d-1$, the result is a $(d-1)$-dimensional complex $\mathcal{R}$ supported on $D$, in which each vertex $u_i$ of $D$ has large valency, namely $m_{i}+d-1$, while all vertices of $\mathcal{R}$ that are not vertices of $D$ have small valencies. 

We require one additional type of modification to complete the construction of $\mathcal{R}^L$, now targeting the $(d-1)$-simplex of $\mathcal{R}$ that was the central simplex of $\mathcal{D}$. Suppose $L$ is any {\it simplicial} convex $d$-polytope. Then we let $\mathcal{R}^L$ denote the $(d-1)$-dimensional complex supported on $D$, in which the central simplex has been replaced by a suitable affine copy of a Schlegel diagram of $L$.

At this point of the construction we still have the choice of the parameters $m_0,\ldots,m_{d-1}$ and the polytopes $L$ at our disposal. These will be chosen as we move along and will depend on the given polytope $Q$. 

The chamber insertion process for the polytope $Q$ employs the action of $\Gamma$ as a group of label preserving automorphisms on the barycentric subdivision $\mathcal{C}(Q)$. For a chamber $C$ of~$\mathcal{C}(Q)$, we let $o(C)$ denote the orbit of $C$ under $\Gamma$ in its action on $\mathcal{C}(Q)$. 

The chamber replacement now proceeds as follows. We first settle the choice of the polytopes $L$. For each chamber orbit $o(C)$ choose a simplicial convex $d$-polytope $L_{o(C)}$ in such a way that no two such polytopes have the same number of vertices. Then, for any fixed choice of parameters $m_0,\ldots,m_{d-1}$ (and associated complex $\mathcal{R}$), no two of the corresponding $(d-1)$-dimensional complexes $\mathcal{R}^{L_{o(C)}}$ have the same number of vertices, and thus no two complexes are combinatorially isomorphic. 

In the final step of the chamber insertion process we first replace, for each chamber orbit $o(C)$, one of its chambers, $C$ (say), by an affine copy of the corresponding complex $\mathcal{R}^{L_{o(C)}}$ such that, for each $i=0,\ldots,d-1$, the vertex $u_i$ of $D$ is mapped onto the vertex of $C$ labelled $i$ in $\mathcal{C}(Q)$. We then exploit $\Gamma$ to carry this new structure to all the other chambers in an orbit, and therefore to all chambers of $\mathcal{C}(Q)$. Recall that $\Gamma(Q)$, and hence $\Gamma$, acts freely and in a label preserving manner on the chambers of $\mathcal{C}(Q)$ (flags of $Q$). More explicitly, if $C'$ is a chamber in the same orbit as $C$, that is, $o(C')=o(C)$, we replace $C'$ by an affine copy of the complex $\mathcal{R}^{L_{o(C)}}$ that we used for $C$, such that, for each $i=0,\ldots,d-1$, the vertex $u_i$ of $D$ is mapped onto the vertex of $C'$ labelled $i$ in $\mathcal{C}(Q)$. In short, with respect to insertion of diagrams we treat $C$ and $C'$ in the same manner, and we can do so without destroying the action of $\Gamma$ because of the existence of label preserving transfer maps from $\Gamma$ between chambers in the same orbit under $\Gamma$. The resulting $(d-1)$-dimensional complex $\mathcal{C}'$ is a refinement of $\mathcal{C}(Q)$ and has the full $(d-2)$-skeleton of $\mathcal{C}(Q)$ as a subcomplex, unrefined. In particular, $\mathcal{C}'$ tiles the boundary $\partial Q$ of $Q$ and hence is topologically a $(d-1)$-sphere. By construction, $\Gamma$ acts on $\mathcal{C}'$ as a group of automorphisms.  

Clearly we may project the complex $\mathcal{C}'$ radially onto any sphere about the centroid of $Q$, and rescale the sphere (if need be) to obtain an isomorphic complex $\mathcal{T}$ which tiles the unit sphere $\mathbb{S}^{d-1}$ in a face-to-face manner by spherical convex polytopes. 

Finally, by adjoining suitable improper faces (of ranks $-1$ and $d$) to $\mathcal{C}'$ we arrive at a spherical abstract $d$-polytope, denoted $\mathcal{P}$. Then the properties of $\mathcal{P}$ described in parts (b) and (c) of the theorem are clear by construction. It remains to establish parts (a) and (d). 
\medskip

{\sf Part~(a).}\  For the proof of part (a), a more subtle choice of the parameters $m_0,\ldots,m_{d-1}$ is needed to guarantee that the polytope $\mathcal{P}$ has the property that $\Gamma(\mathcal{P})=\Gamma$. Suppose $Q$ and $\mathcal{C}(Q)$ are as before.  For a vertex $u$ of $\mathcal{C}(Q)$, we let $s_u$ denote the number of chambers containing $u$, and note that this is just the number of flags of $Q$ containing the face of $Q$ represented by~$u$. If $x$ is a vertex of any complex $\mathcal{S}$, we also write ${\rm val}_\mathcal{S}(x)$ for the valency of $x$ in the edge graph ($1$-skeleton) of $\mathcal{S}$.

It is straightforward to compute the valencies of the vertices of $\mathcal{P}$ (or $\mathcal{C}'$). The valencies of the vertices of $\mathcal{P}$ in $\mathcal{C}(Q)$ depend on $m_0,\ldots,m_{d-1}$, while those of the vertices of $\mathcal{P}$ outside of $\mathcal{C}(Q)$ do not depend on $m_0,\ldots,m_{d-1}$ but are bounded by a constant depending on $d$ and the polytopes $L_{o(C)}$. The details are as follows. For each $i=0,\ldots,d-1$, each vertex $x$ of $\mathcal{C}(Q)$ labelled $i$ is the vertex labelled $i$ in every chamber that contains it, and therefore
\begin{equation}
\label{valxi}
{\rm val}_\mathcal{P}(x) = {\rm val}_{\mathcal{C}(Q)}(x) + s_xm_{i}.
\end{equation}
If $x$ is a vertex of the central simplex in the complex $\mathcal{R}^{L_{o(C)}}$ inserted into a chamber $C$, then
\[ {\rm val}_\mathcal{P}(x) = 2(d-1) +({\rm val}_{L_{o(C)}}(x) - (d-1)) = {\rm val}_{L_{o(C)}}(x) + d-1 .\]
If $x$ is a vertex of the copy of a polytope $R_i$ inside a chamber $C$ that is not a vertex of $C$ or of the central simplex inside $C$, then ${\rm val}_\mathcal{P}(x)=d$. Finally, if $x$ is a vertex of the copy of $L_{o(C)}$ in a chamber $C$ that is not a vertex of the central simplex in $C$, then
${\rm val}_\mathcal{P}(x)={\rm val}_{L_{o(C}}(x)$,  In particular, 
there exists a constant $m$ (depending on $d$ and our choice of polytopes $L_{o(C)}$) such that 
\begin{equation}
\label{emm}
{\rm val}_\mathcal{P}(x) \leq m 
\end{equation}
for all vertices $x$ of $\mathcal{P}$ outside of $\mathcal{C}(Q)$. 

The parameters $m_i$ are chosen inductively for $i=d-1,d-2,\ldots,0$, beginning with $m_{d-1}:=m$ where $m$ is a fixed constant as in (\ref{emm}). Suppose for a moment that a specific parameter value $m_i$ has been chosen and then substituted on the left side of equation (\ref{valxi}) to give certain integers, ${\rm val}_{\mathcal{C}(Q)}(x) + s_xm_{i}$, representing vertex valencies in \mc P. In this situation we write $a_{i}$ and $b_{i}$ for the minimum or maximum of these integers ${\rm val}_{\mathcal{C}(Q)}(x) + s_xm_{i}$, respectively, taken over all vertices $x$ in $\mathcal{C}(Q)$ labelled $i$, as given in (\ref{valxi}). Thus $a_{i}\leq {\rm val}_{\mathcal{C}(Q)}(x) + s_xm_{i} \leq b_{i}$ for each vertex $x$ of $\mathcal{C}(Q)$ labelled $i$. In particular, we trivially have $m< a_{d-1}\leq b_{d-1}$. 

Proceeding inductively, we next choose $m_{d-2}$ in such a way that $b_{d-1}<a_{d-2}$. More generally, if $j\leq d-1$ and $m_{j}$ has already been chosen, we pick $m_{j-1}$ in such a way that $b_{j}<a_{j-1}$. At the final step when $j=1$, we are choosing $m_0$. Our choice of $m_0,\ldots,m_{d-1}$ then guarantees that
\begin{equation}
\label{ab}
m< a_{d-1}\leq b_{d-1} < a_{d-2}\leq b_{d-2} <\ldots\ldots < a_{1}\leq b_{1}<a_{0}\leq b_{0}.
\end{equation}
Now set $M_{i}:=[a_{i},\,b_{i}]$ for each $i$, and observe that $M_0,\ldots,M_{d-1}$ are mutually disjoint intervals. 

We now are ready to prove part (a) of the theorem. We show that if the parameters $m_0,\ldots,m_{d-1}$ are chosen in such a way that (\ref{ab}) is satisfied, then $\Gamma(\mathcal{P})=\Gamma$. Suppose $m_0,\ldots,m_{d-1}$ are chosen  such that (\ref{ab}) holds.

For the proof of part (a) we can mostly proceed as in \cite{SWil}, specifically Lemma~2. By construction, $\Gamma$ is a subgroup of $\Gamma(\mathcal{P})$, so we only need to prove the opposite inclusion. The initial steps of the proof are the same (almost word for word) as those in \cite[pp. 451-452]{SWil}. To make the present paper reasonably self-contained we reproduce here some of the arguments. 

The first step is to show that every automorphism of $\mathcal{P}$ is induced by an automorphism of~$Q$. Suppose $\gamma$ is an automorphism of $\mathcal{P}$. We want to show that $\gamma$ lies in $\Gamma$. The vertices of~$\mathcal{P}$ corresponding to vertices of $\mathcal{C}(Q)$ have higher valency than other vertices of $\mathcal{P}$ and hence must be permuted among each other by $\gamma$. Thus $\gamma$ maps vertices of $\mathcal{C}(Q)$ to vertices of $\mathcal{C}(Q)$. Moreover, by our choice of $m_0,\ldots,m_{d-1}$, the valency of each vertex of $\mathcal{C}(Q)$ labelled $i$ lies in~$M_i$ for each $i$, and the sets $M_0,\ldots,M_{d-1}$ are mutually disjoint. Hence $\gamma$ must map vertices of $\mathcal{C}(Q)$ labelled $i$ to vertices of $\mathcal{C}(Q)$ labelled $i$, for each $i$.
In particular, since the vertices of $\mathcal{C}(Q)$ labelled $0$ are precisely the vertices of $Q$, the vertices of $Q$ must be permuted by $\gamma$. Since the full $(d-2)$-skeleton of $\mathcal{C}(Q)$ is an (unrefined) subcomplex of the $(d-1)$-dimensional complex $\mathcal{C}'$ and already contains all the information about~$\mathcal{C}(Q)$ (only the chambers need to be added to the $(d-2)$-skeleton to obtain $\mathcal{C}(Q)$), it then follows that $\gamma$ induces a label preserving automorphism of~$\mathcal{C}(Q)$ mapping vertices of $Q$ to vertices of $Q$. Here it helps to bear in mind that $\mathcal{C}(Q)$ lies on a sphere.

We show that $\gamma$ induces an automorphism $\gamma_Q$ (say) of $Q$ itself, and that $\gamma_Q$ determines $\gamma$ uniquely. Since every face of the polytope $Q$ is uniquely determined by the set of flags of $Q$ containing this face, it is clear that every vertex of $\mathcal{C}(Q)$ is uniquely determined by the chambers of $\mathcal{C}(Q)$ containing this vertex. Now if $F$ is an $i$-face of $Q$ and $w_F$ is the corresponding vertex labelled $i$ in $\mathcal{C}(Q)$, then $\gamma(w_F)$ is also a vertex labelled $i$ in $\mathcal{C}(Q)$ and hence must corresponds to an $i$-face of $Q$. This $i$-face is simply $\gamma(F)$. Note here that $\gamma$ induces an isomorphism between the vertex-stars of $w_F$ and $\gamma(w_F)$ in $\mathcal{C}(Q)$; in particular, chambers of $\mathcal{C}(Q)$ containing $w_F$ are mapped in a one-to-one and label preserving manner to chambers containing $\gamma(w_F)$. 

It remains to show that $\gamma_Q$ determines $\gamma$ uniquely. To this end suppose $\gamma_Q$ is the identity map on $Q$. Then the automorphism induced by $\gamma$ on $\mathcal{C}(Q)$, $\gamma_{\mathcal{C}(Q)}$ (say), is also the identity map on $\mathcal{C}(Q)$, since the simplices in $\mathcal{C}(Q)$ just represent the chains of the boundary complex of $Q$, such that vertices of $\mathcal{C}(Q)$ labelled $i$ correspond to faces of $Q$ of rank~$i$. With regards to chamber replacement in $\mathcal{C}(Q)$ by complexes like $\mathcal{R}^{L_{o(C)}}$, note that $\gamma$ maps a complex like $\mathcal{R}^{L_{o(C)}}$ placed into a chamber, to a similar such complex placed into the image chamber under~$\gamma$. But since $\gamma$ fixes every face of a chamber of $\mathcal{C}(Q)$, which in a complex like $\mathcal{R}^{L_{o(C)}}$ becomes the outer simplex, $\gamma$ then must also fix the entire complex inserted into the chamber. This follows from a simple connectedness argument. The outer simplex of a complex $\mathcal{R}^{L_{o(C)}}$ can be joined to every tile in $\mathcal{R}^{L_{o(C)}}$ by a finite sequence of successively adjacent tiles (successive tiles meet in a facet). Beginning with the outer simplex on which $\gamma$ is the identity map, we then can move along the sequence to show that $\gamma$ is also the identity map on every tile in the sequence. Hence $\gamma$ is the identity map on the entire complex $\mathcal{C}'$ and therefore also on $\mathcal{P}$. 

Thus $\Gamma(\mathcal{P})$ can be viewed as a subgroup of $\Gamma(Q)$ containing $\Gamma$. The final step consists of showing that $\Gamma(\mathcal{P})=\Gamma$. Here the arguments of \cite[pp. 452-453]{SWil} need to be modified as follows.

Suppose that $\Gamma$ is a proper subgroup of $\Gamma(\mathcal{P})$. Then since $\Gamma(Q)$ acts freely on the chambers of $\mathcal{C}(Q)$, and $\Gamma(\mathcal{P})$ is a subgroup of $\Gamma(Q)$, the orbits of chambers $C$ of $\mathcal{C}(Q)$ under $\Gamma(\mathcal{P})$ are strictly larger than those under $\Gamma$. In particular, there are two different orbits $o(C_1)$ and $o(C_2)$ of chambers $C_1$ and $C_2$  under $\Gamma$, which lie in the same orbit under $\Gamma(\mathcal{P})$. Any automorphism $\gamma$ of $\Gamma(\mathcal{P})$ which maps a chamber $C_{1}'$ in $o(C_1)$ to a chamber $C_{2}'$ in $o(C_2)$ induces an isomorphism between the corresponding complexes $\mathcal{R}^{L_{o(C_1)}}$ and $\mathcal{R}^{L_{o(C_2)}}$ inserted into $C_{1}'$ and $C_{2}'$, respectively. However, this is impossible, since the complexes $\mathcal{R}^{L_{o(C)}}$ are mutually non-isomorphic, by our choice of the polytopes $L_{o(C)}$. Thus $\Gamma(\mathcal{P})=\Gamma$. This completes the proof of part (a) of the theorem.
\medskip

{\sf Part~(d).}\  For the proof of part (d) we must further refine our arguments. So let $\Gamma$ be a subgroup of the geometric symmetry group $G(Q)$ of $Q$. In this case we choose the standard barycentric subdivision for $\mathcal{C}(Q)$ (with the vertices of $\mathcal{C}(Q)$ at the centroids of the faces of $Q$). Then $\mathcal{C}(Q)$ is invariant under $\Gamma$, since geometric symmetries of convex polytopes map face centroids to face centroids. Next we proceed as before and replace, for each chamber orbit~$o(C)$ under $\Gamma$, one of its chambers, $C$ (say), by an affine copy of the corresponding complex~$\mathcal{R}^{L_{o(C)}}$ such that, for each $i=0,\ldots,d-1$, the vertex $u_i$ of $D$ is mapped onto the vertex of $C$ labelled $i$. For the chamber replacement of the remaining chambers of $\mathcal{C}(Q)$ we use as transfer maps the elements of $\Gamma$, which now are geometric symmetries of $\mathcal{C}(Q)$. More explicitly, if $C'$ is a chamber of $\mathcal{C}(Q)$ with $o(C')=o(C)$, and $\gamma$ is the (unique, labeling preserving) symmetry that maps $C$ to $C'$, we replace $C'$ by the image of $\mathcal{R}^{L_{o(C)}}$ under~$\gamma$. Then the overall structure is also invariant under $\Gamma$, and the same holds for its (scaled) projected image $\mathcal{T}$ on $\mathbb{S}^{d-1}$. Note that $\mathcal{T}$ cannot acquire geometric symmetries which do not belong to $\Gamma$, since these would also give combinatorial symmetries, which is impossible by part (a). This completes the proof of (d).
\end{proof}

Our next theorem is based on Theorem~\ref{thm1} and deals with geometric symmetry breaking results for convex polytopes.

\begin{theorem}
\label{thm2}
Let $d\geq 3$, let $Q$ be a convex $d$-polytope, and let $\Gamma$ be a subgroup of $\Gamma(Q)$.
The abstract polytope $\mathcal{P}$ of Theorem~\ref{thm1} may be realized by a convex $d$-polytope $P$. Moreover, if~$\Gamma$ is a subgroup of $G(Q)$, then $P$ can be chosen in such a way that $G(P)=\Gamma=\Gamma(P)$.
\end{theorem}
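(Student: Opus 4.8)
The plan is to ``convexify'' the tessellation $\mathcal{T}$ of Theorem~\ref{thm1} --- equivalently, the flat refinement $\mathcal{C}'$ of $\mathrm{bd}(Q)$ constructed in its proof --- into the boundary complex of a genuine convex $d$-polytope $P$ without changing the combinatorial type, and, when $\Gamma\le G(Q)$, to carry out the whole construction $\Gamma$-equivariantly. The starting point is that every building block used in the chamber insertion is not merely a complex but an honest convex polytope: each complex $\mathcal{R}^{L_{o(C)}}$ is, by construction, an affine copy of a Schlegel diagram, onto the simplex $D$, of a convex $d$-polytope $\widehat{R}_{o(C)}$ obtained from a $d$-crosspolytope $X$ by successively gluing the pyramids $R_0,\dots,R_{d-1}$ and the simplicial polytope $L_{o(C)}$ onto $d+1$ of the facets of $X$. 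Each of these gluings is a gluing of two convex polytopes along a common $(d-1)$-simplex facet, and can be performed ``shallowly'' --- after an affine flattening of the piece being attached toward the gluing hyperplane --- so that the result is again convex; the facet of $X$ carrying the Schlegel projection survives as a simplex facet $D$ of $\widehat{R}_{o(C)}$, and $\mathrm{bd}(\widehat{R}_{o(C)})\setminus\{D\}$, projected into $D$, is exactly $\mathcal{R}^{L_{o(C)}}$.

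I would then assemble $P$ in two stages. First, realize the barycentric subdivision $\mathcal{C}(Q)$ itself as the boundary complex of a convex $d$-polytope $\widetilde{Q}$: fixing the vertices of $Q$ and displacing the relative-interior point chosen in each $j$-face of $Q$ radially outward from the centroid of $Q$ by an amount $\delta_j$, with $0=\delta_0<\delta_1<\dots<\delta_{d-1}$ all sufficiently small, turns each chamber into a genuine simplex facet and keeps every ridge strictly convex. Second, into each simplex facet $C$ of $\widetilde{Q}$ glue, from the outside, a small affine copy of $\widehat{R}_{o(C)}$ whose facet $D$ is identified with $C$ so that, for each $i$, the vertex $u_i$ goes to the vertex of $C$ labelled $i$, the copy being flattened toward the hyperplane of $C$ by a sufficiently small amount. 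Since there are finitely many chambers one can do these gluings one at a time, each time choosing the flattening small enough that all affected ridges --- those interior to the inserted complex, those along $\mathrm{bd}(C)$ lying inside a facet of $Q$, and those along ridges of $Q$ --- stay strictly convex. The resulting convex $d$-polytope $P$ has $\mathrm{bd}(P)\cong\mathcal{C}'$, hence face lattice isomorphic to $\mathcal{P}$, which proves the first assertion.

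For the second assertion, assume $\Gamma\le G(Q)$ and use the standard barycentric subdivision, which is $\Gamma$-invariant. The radial displacement in the first stage depends only on the dimension of the underlying face of $Q$, so $\widetilde{Q}$ is $\Gamma$-invariant. In the second stage I would process the chambers one $\Gamma$-orbit at a time, using one fixed affine copy of $\widehat{R}_{o(C)}$ on a chosen representative $C$ and its images under the (label-preserving) elements of $\Gamma$ on the remaining chambers of the orbit --- the same equivariant transfer as in the proof of Theorem~\ref{thm1}(d) --- with a common flattening parameter, small enough for that orbit, so that every intermediate polytope stays convex. Then $P$ is invariant under $\Gamma$, so $\Gamma\le G(P)$; conversely $G(P)\le\Gamma(P)$ always (a geometric symmetry of a full-dimensional convex polytope is a combinatorial automorphism, and this map is injective), and $\Gamma(P)=\Gamma(\mathcal{P})=\Gamma$ by Theorem~\ref{thm1}(a). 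Hence $G(P)=\Gamma=\Gamma(P)$.

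I expect the main obstacle to be the bookkeeping hidden in the word ``shallow'': one must check that the radial push in the first stage and each facet-gluing in the second can be made small enough that no dihedral angle ever reaches $\pi$ --- so that every prescribed cell remains a genuine facet and no accidental coplanarity merges two of them --- simultaneously at all of the (finitely many) ridges of each intermediate polytope, and that these constraints remain compatible when, in the equivariant version, they must hold at once across a whole $\Gamma$-orbit. This is a routine but slightly tedious openness/continuity argument of the same kind as in \cite{SWil}, rather than a source of genuine difficulty.
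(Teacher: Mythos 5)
Your proposal follows essentially the same route as the paper: first realize $\mathcal{C}(Q)$ as the boundary complex of a convex $d$-polytope, then effect the chamber replacements by gluing sufficiently thin copies of suitable convex polytopes onto its facets, doing all of this $\Gamma$-equivariantly when $\Gamma\le G(Q)$, and finally noting $\Gamma\le G(P)\le\Gamma(P)=\Gamma$. The paper compresses both steps into citations of \cite{SWil} (Lemma~3 for the realization of $\mathcal{C}(Q)$, Theorem~4.2 for the gluing, which uses thin \emph{projective} copies rather than your affine flattening), whereas you spell them out --- the radial-push construction of $\widetilde{Q}$ and the observation that each $\mathcal{R}^{L_{o(C)}}$ is combinatorially a Schlegel diagram of a single polytope $\widehat{R}_{o(C)}$ --- but these are reformulations of the same mechanism, not a different method.
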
 

\begin{proof}
The proof of the first statement is the same as the proof of \cite[Theorem 4.2]{SWil}:\ first the complex $\mathcal{C}(Q)$ is realized by a convex $d$-polytope $R$, and then all subsequent modifications to the boundary of $R$ required for the construction of $\mathcal{P}$ are achieved by gluing projective copies of convex polytopes to the facets of $R$ that are sufficiently thin in the direction of the outward facing normal to the facet. The result is a convex $d$-polytope $P$.

The proof of the second statement is similar. First observe that $R$ can be chosen in such a way that $\Gamma$ lies in $G(R)$. In fact, the construction of $R$ described in the proof of \cite[Lemma 3]{SWil} respects symmetries and leads to a convex $d$-polytope $R$ whose symmetry group contains $\Gamma$ as a subgroup.  The chamber replacement can again be realized by gluing thin projective copies of convex polytopes to facets of $R$. Now this is done in two steps. First, we only glue copies to the facets of $R$ which correspond to chambers in a system of representatives for the chamber orbits $o(C)$ on $\mathcal{C}(Q)$ under $\Gamma$. Second, we use the symmetries in $\Gamma$ to attach copies to the remaining facets of $R$, such that facets of $R$ equivalent under $\Gamma$ receive projective copies which are also equivalent under $\Gamma$. Bear in mind that the boundary complex of $R$ has the structure of a labeled simplicial complex on which $\Gamma$ acts freely in a label preserving manner. If the projective copies used in the first step are sufficiently thin, then the resulting structure is a convex $d$-polytope. By construction this polytope is invariant under $\Gamma$. 
\end{proof}

Parts of Theorem~\ref{thm1} hold more generally for finite abstract polytopes. With a very similar proof we can establish
the following theorem.

\begin{theorem}
\label{thm2a}
Let $d\geq 3$, let $\mathcal{Q}$ be a finite abstract $d$-polytope, and let $\Gamma$ be a subgroup of~$\Gamma(\mathcal{Q})$. Then there exists a finite abstract $d$-polytope $\mathcal{P}$ with the following properties:\\
(a)\ $\Gamma(\mathcal{P})=\Gamma$.\\
(b)\ $\mathcal{P}$ is isomorphic to a face-to-face tessellation on the topological space 
$|\mathcal{C}(\mathcal{Q})|$ of the order complex~$\mathcal{C}(\mathcal{Q})$ of $\mathcal{Q}$ by topological copies of convex polytopes. \\
(c)\ ${\rm skel}_{d-2}(\mathcal{C}(Q))$ is a subcomplex of ${\rm skel}_{d-2}(\mathcal{P})$.
\end{theorem}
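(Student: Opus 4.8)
The plan is to run the proof of Theorem~\ref{thm1}(a)--(c) essentially verbatim, with the geometric barycentric subdivision $\mathcal{C}(Q)$ of a convex polytope replaced by the order complex $\mathcal{C}(\mathcal{Q})$ of the abstract polytope $\mathcal{Q}$. Recall that $\mathcal{C}(\mathcal{Q})$ is a labelled (abstract) simplicial complex whose vertices are the proper non-empty faces of $\mathcal{Q}$ labelled by rank, whose simplices are the chains of $\mathcal{Q}$, and whose chambers correspond to the flags of $\mathcal{Q}$; adjoining two improper faces turns $\mathcal{C}(\mathcal{Q})$ into the combinatorial barycentric subdivision of $\mathcal{Q}$, which is itself a finite abstract $d$-polytope (see \cite[Sect.~2C]{McMSch02}). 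The group $\Gamma(\mathcal{Q})$, and hence $\Gamma$, acts on $\mathcal{C}(\mathcal{Q})$ by label-preserving automorphisms, and this action is \emph{free} on chambers, because an automorphism of an abstract polytope that fixes a flag must be the identity. These are precisely the features of $\mathcal{C}(Q)$ that were used in the proof of Theorem~\ref{thm1}, so the bulk of the argument transfers without change.

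Concretely, I would first build the chamber-replacement complexes $\mathcal{R}^{L_{o(C)}}$ exactly as before: pick simplicial convex $d$-polytopes $L_{o(C)}$, one per chamber orbit $o(C)$ of $\Gamma$ on $\mathcal{C}(\mathcal{Q})$, with pairwise distinct numbers of vertices, and pick the parameters $m_0,\ldots,m_{d-1}$ so that $(\ref{ab})$ holds, with $m$ as in $(\ref{emm})$. Then perform the replacement combinatorially: replace a chosen representative of each chamber orbit by a copy of $\mathcal{R}^{L_{o(C)}}$, matching the vertex $u_i$ of the outer simplex with the label-$i$ vertex of the chamber, and use the free $\Gamma$-action to transfer this replacement across the orbit. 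Since each $\mathcal{R}^{L_{o(C)}}$ is a polytopal subdivision of a $(d-1)$-simplex that leaves the boundary of that simplex unrefined, the process is well defined, it keeps ${\rm skel}_{d-2}(\mathcal{C}(\mathcal{Q}))$ intact, and the resulting complex $\mathcal{C}'$ inherits an action of $\Gamma$. Adjoining improper faces yields the poset $\mathcal{P}$. Parts (b) and (c) are then immediate: replacing a chamber by $\mathcal{R}^{L_{o(C)}}$ does not change the underlying space $|\mathcal{C}(\mathcal{Q})|$, the tiles of $\mathcal{C}'$ are topological copies of facets of Schlegel diagrams and hence of convex $(d-1)$-polytopes, and ${\rm skel}_{d-2}(\mathcal{C}(\mathcal{Q}))\subseteq{\rm skel}_{d-2}(\mathcal{P})$ by construction.

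For part (a) I would copy the argument of Theorem~\ref{thm1}(a) word for word. The inequalities $(\ref{emm})$ and $(\ref{ab})$ force any automorphism $\gamma$ of $\mathcal{P}$ to permute the vertices of $\mathcal{C}(\mathcal{Q})$ and to preserve their labels, hence to carry inserted complexes to inserted complexes, so that $\gamma$ induces a label-preserving automorphism of $\mathcal{C}(\mathcal{Q})$ and thus an automorphism $\gamma_\mathcal{Q}$ of $\mathcal{Q}$; a connectedness argument inside each $\mathcal{R}^{L_{o(C)}}$ shows $\gamma_\mathcal{Q}$ determines $\gamma$; and since the complexes $\mathcal{R}^{L_{o(C)}}$ attached to different orbits are pairwise non-isomorphic, $\gamma_\mathcal{Q}$ must preserve every $\Gamma$-orbit of chambers, which forces $\gamma_\mathcal{Q}\in\Gamma$. (The reverse inclusion $\Gamma\leq\Gamma(\mathcal{P})$ is built in.) None of this uses convexity or ambient geometry. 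As in Theorem~\ref{thm1}, the hypothesis $d\geq3$ is essential, since the automorphism group of a finite abstract $2$-polytope is dihedral.

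The one step that genuinely needs new attention is verifying that $\mathcal{P}$ is an abstract $d$-polytope, since we can no longer simply appeal to ``$\mathcal{C}'$ tessellates a sphere'' as in the convex case. The diamond condition is local and is checked as before: a $(d-2)$-face interior to an inserted $\mathcal{R}^{L_{o(C)}}$ lies in exactly two of its tiles, a $(d-2)$-face coming from $\mathcal{C}(\mathcal{Q})$ lies in exactly one tile of each of the two replacement complexes meeting along it (every $(d-2)$-simplex of $\mathcal{C}(\mathcal{Q})$ being in exactly two chambers, by the diamond condition in $\mathcal{Q}$), and the diamond condition in lower ranks holds inside the individual tiles, which are face lattices of convex polytopes. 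Strong flag-connectivity of $\mathcal{P}$ I would derive from strong flag-connectivity of $\mathcal{Q}$ --- equivalently, flag-connectivity of all its sections --- together with the fact that each $\mathcal{R}^{L_{o(C)}}$, being assembled from Schlegel diagrams of convex polytopes, is a strongly flag-connected polytopal ball whose boundary is the boundary complex of a simplex: one propagates connecting flag sequences through adjacent chambers of $\mathcal{C}(\mathcal{Q})$ (connected by flag-connectivity of $\mathcal{Q}$) and through adjacent tiles within each replacement complex. This is the main obstacle; once it is settled, $\mathcal{P}$ is a finite abstract $d$-polytope with the stated properties and the proof is complete.
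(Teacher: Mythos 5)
Your proposal is correct and matches the paper's intended approach: the paper offers no argument for Theorem~\ref{thm2a} beyond the remark that it follows from Theorem~\ref{thm1} ``with a very similar proof,'' and you have faithfully reconstructed that proof by running the chamber-replacement machinery on the order complex of $\mathcal{Q}$ in place of the geometric barycentric subdivision of $Q$, using the free, label-preserving action of $\Gamma(\mathcal{Q})$ on flags. You also correctly single out the one step that needs fresh attention in the abstract setting --- verifying that $\mathcal{P}$ is an abstract $d$-polytope, which in the convex case followed from $\mathcal{C}'$ tiling a sphere --- and your verification (diamond condition locally inside tiles and across tile boundaries via the pseudomanifold property of $\mathcal{C}(\mathcal{Q})$; strong flag-connectedness by propagation through adjacent chambers of $\mathcal{C}(\mathcal{Q})$ and adjacent tiles of each $\mathcal{R}^{L_{o(C)}}$) is sound.
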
 

\section{Prescribing involutions as central symmetries}\label{section-involutions}

As an application of Theorem~\ref{thm2} we consider the following problem.  Given a finite group $\Gamma$ and a subgroup $\Lambda$ of $\Gamma$, can we find a convex polytope $P$ such that
\begin{itemize}
	\item $\Gamma(P)=\Gamma$ and
	\item $\Lambda$ acts on $P$ in a predetermined way? 
\end{itemize}

We are particularly interested in the case where $\Lambda=C_2$ and $\Lambda$ is generated by a central involution $\sigma$ of $\Gamma$. We wish to find a polytope $P$ such that $\sigma$ acts on $P$ as a central symmetry; that is, abusing notation, $\sigma(x) = - x$ for all $x \in P$. Thus $P$ would be centrally symmetric under the central symmetry $\sigma$. A positive answer would give a centrally symmetric version of the results of~\cite{SWil}.  Here we show that the answer is always positive for finite abelian groups containing an involution, that is, for abelian groups of even order.

\begin{theorem}\label{thm3}
Let $\Gamma$ be a finite abelian group of even order, and let $\sigma$ be an involution of $\Gamma$. Then there is a positive integer $d$ and a centrally symmetric convex $d$-polytope $P$ in $\mathbb{E}^d$, such that $G(P)=\Gamma(P)=\Gamma$ and $\sigma$ is realized as the central symmetry of $P$, that is, $\sigma(x) = -x$ for all $x \in \mathbb{E}^d$.
\end{theorem}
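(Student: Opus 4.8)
The plan is to realize $\Gamma$ first as a \emph{geometric} symmetry group of some convex $d$-polytope $Q$ in which the prescribed involution $\sigma$ already acts as the central inversion $x\mapsto -x$, and then to invoke Theorem~\ref{thm2} in its symmetry-respecting form to break $G(Q)$ and $\Gamma(Q)$ down to exactly $\Gamma$ while leaving the action of $\sigma$ untouched. Thus everything hinges on (i) producing such a $Q$ by representation theory, and (ii) checking that the construction behind Theorem~\ref{thm2} can be run equivariantly with the origin as a global fixed point, so that central symmetry is preserved.

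For step (i) I would construct a faithful real orthogonal representation $\rho\colon\Gamma\to O(d)$ with $\rho(\sigma)=-I_d$ and with no trivial summand. Since $\Gamma$ is finite abelian and $\sigma$ has order $2$, the map $\widehat{\Gamma}\to\{\pm 1\}$, $\chi\mapsto\chi(\sigma)$, is a surjective homomorphism, so $S:=\{\chi\in\widehat{\Gamma}:\chi(\sigma)=-1\}$ is a coset of an index-$2$ subgroup $H\le\widehat{\Gamma}$. The set $S$ separates the points of $\Gamma$: if $\chi(g)=1$ for all $\chi\in S$, fix $\chi_0\in S$; then for every $\eta\in H$ we have $\chi_0\eta\in S$, which together with $\chi_0(g)=1$ (the case $\eta=1$) forces $\eta(g)=1$ for all $\eta\in H$, hence $\chi(g)=1$ for all $\chi\in\widehat{\Gamma}$ and $g=e$. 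Grouping the characters of $S$ into real irreducibles (a real character yields a $1$-dimensional summand on which $\sigma$ acts as $-1$; a conjugate pair $\{\chi,\bar\chi\}\subseteq S$ yields a $2$-dimensional summand on which $\sigma$ acts as the rotation by $\pi$, that is, as $-I_2$) and taking the direct sum produces $\rho$: it is faithful because $S$ separates points, it satisfies $\rho(\sigma)=-I$ because $\sigma$ acts as minus the identity on every summand, and it has no trivial summand because the trivial character does not lie in $S$. If $d<3$, replace $\rho$ by $\rho\oplus\rho\oplus\rho$, which retains all three properties. Now choose a point $v$ with trivial $\Gamma$-stabilizer (a generic $v$ works) and put $Q:=\mathrm{conv}(\Gamma\cdot v)$. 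Because $\rho$ has no trivial summand, $\Gamma\cdot v$ affinely spans $\mathbb{E}^d$, so $Q$ is a convex $d$-polytope; by construction $\Gamma\le G(Q)$, the centroid of $Q$ is the origin, and $\sigma$ acts on $Q$ as $x\mapsto -x$.

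For step (ii) I would apply Theorem~\ref{thm2} to $Q$ and the subgroup $\Gamma\le G(Q)$ to obtain a convex $d$-polytope $P$ with $G(P)=\Gamma=\Gamma(P)$, and then verify that $P$ inherits the central symmetry. The construction underlying Theorem~\ref{thm2} (and the auxiliary realization result of \cite{SWil} on which it rests) is equivariant with respect to the acting group $\Gamma$: one uses the standard barycentric subdivision $\mathcal{C}(Q)$, which is $\Gamma$-invariant and for which $\sigma=-I$ is a symmetry fixing the origin; one realizes $\mathcal{C}(Q)$ by a convex $d$-polytope $R$ with $\Gamma\le G(R)$ by displacing the newly introduced vertices (the centroids of the proper faces of $Q$) radially outward by amounts depending only on their $\Gamma$-orbits, which keeps $R$ invariant under $\sigma=-I$; and finally one attaches sufficiently thin projective copies of convex polytopes to the facets of $R$, choosing the copies on a system of orbit representatives and transporting them by the elements of $\Gamma$, so that facets in a common $\Gamma$-orbit receive $\Gamma$-equivalent copies. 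Since $\sigma=-I$ is one of the transfer isometries, $P=-P$, so $-I\in G(P)$ and $P$ is centrally symmetric; and because $G(P)=\Gamma(P)=\Gamma$ with $\sigma\in\Gamma$ acting geometrically as $-I$, the involution $\sigma$ is realized as the central symmetry $x\mapsto -x$.

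The main obstacle is exactly this second step. Theorem~\ref{thm2} as stated guarantees only that $G(P)=\Gamma=\Gamma(P)$ \emph{as a group}, whereas here we must control the specific geometric action of $\sigma$ and keep it equal to $x\mapsto -x$ throughout the symmetry-breaking process. This requires verifying that each ingredient of the construction — the choice of barycentric subdivision, the realization by the auxiliary polytope $R$, and the facet-wise gluing of thin projective copies — can be carried out $\Gamma$-equivariantly with the origin as a common fixed point, so that the central inversion is never destroyed. By comparison, the representation-theoretic input of the first step (a faithful $\rho$ with $\rho(\sigma)=-I$ and no trivial summand) is routine.
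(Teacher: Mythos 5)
Your overall strategy — build a convex $Q$ with $\Gamma\le G(Q)$ on which $\sigma$ already acts as $-\operatorname{id}$, then invoke Theorem~\ref{thm2} to shave $G(Q)$ and $\Gamma(Q)$ down to exactly $\Gamma$ — is exactly the strategy of the paper, and your reading of the equivariance hidden in Theorem~\ref{thm2} (step (ii)) is correct. What is genuinely different is how you manufacture $Q$. The paper splits $\Gamma$ via the structure theorem into even cyclic factors $\Gamma_1,\dots,\Gamma_k$ and a remainder $\Gamma_{k+1}$ with $\sigma$ living entirely in the first block, builds an explicit $4$-dimensional orbit polytope for each even cyclic factor using the action $(u,v)\mapsto(e^{\pi i/m}u,e^{\pi i/m}v)$ on $\mathbb{C}^2$, combines these by a Cartesian product, and then tensors with a regular simplex to absorb $\Gamma_{k+1}$. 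Your approach replaces all of that with a single character-theoretic construction: the coset $S=\{\chi\in\widehat{\Gamma}:\chi(\sigma)=-1\}$ separates points, and summing up the real irreducibles carried by $S$ yields in one stroke a faithful orthogonal $\rho$ with $\rho(\sigma)=-I$ and no trivial constituent. This is cleaner and more uniform; it also makes transparent the precise obstruction to going beyond abelian groups (one needs a faithful real representation sending $\sigma$ to $-I$, which for non-abelian $\Gamma$ may fail to exist — indeed that is exactly what goes wrong in Theorem~\ref{thm4} and Open Question~\ref{open3}).

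There is, however, one real gap. The sentence ``Because $\rho$ has no trivial summand, $\Gamma\cdot v$ affinely spans $\mathbb{E}^d$'' is false as stated: the absence of a trivial summand guarantees that the only fixed point is the origin (hence the centroid of the orbit is $0$), but it does not guarantee that a single orbit spans. What spanning actually uses is that $\rho$ is \emph{multiplicity-free}: then, by linear independence of characters, the orbit of a generic $v$ has full affine span. Your original $\rho=\bigoplus_{\chi\in S}\chi$ is multiplicity-free, so this is fine when $|S|\ge 3$. But the patch you apply when $d=|S|<3$ — replacing $\rho$ by $\rho\oplus\rho\oplus\rho$ — destroys multiplicity-freeness, and then the orbit of a single point spans only a subspace of dimension at most $|S|$, so $Q$ is not a $d$-polytope in $\mathbb{E}^{3|S|}$. (Concretely, for $\Gamma=C_2\times C_2$ with $\sigma=a$, $S=\{\chi_2,\chi_4\}$, the tripled $\rho'$ acts by diagonal signs on $\mathbb{R}^6$, and the orbit of any $v$ is four points spanning an affine $3$-flat.) The fix is small and mirrors what the paper actually does: take $Q$ to be the convex hull of the $\Gamma$-orbit of a sufficiently large \emph{set} of generic points rather than of a single point; that convex hull is automatically full-dimensional, $\Gamma$-invariant, and centered at the origin, and the rest of your argument goes through unchanged. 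With that replacement your proof is correct, and the representation-theoretic packaging is a genuine simplification of the paper's cyclic-plus-product-plus-tensor construction of $Q$.
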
 

\begin{proof}
Let us begin with the case where $\Gamma$ is a cyclic group of even order with generator~$\gamma$.  Thus $\Gamma = C_{2m}$ for some $m\geq 1$, and $\sigma = \gamma^m$.  We show that there exists a polytope of the desired kind in dimension $d=4$. Consider the action of $\Gamma$ as a group of isometries on $\mathbb{E}^4$, here viewed as complex $2$-space $\mathbb{C}^2$ (with $x\in\mathbb{E}^4$ corresponding to $(u,v)\in\mathbb{C}^2$), defined by letting $\gamma$ act as the mapping 
\[(u,v) \mapsto (e^{\pi i / m}\,u, e^{\pi i / m}\,v).\] 
Notice that for all $x\in \mathbb{E}^4$,\ $||\gamma (x)||=||x||$. If we take a large enough finite set of points $S$ in $\mathbb{S}^3$ (a five-element subset $S$ in general position suffices if $m\geq 3$, although one can do with less), then the convex hull of the orbit set 
\[\Gamma\!\cdot\! S:=\{\varphi(x)\mid \varphi\in\Gamma,\ x\in S\}\] 
is a convex 4-polytope $Q$ such that $\Gamma \leq G(Q)$ and $\sigma (x) = -x$ for all $x \in Q$. We then apply the construction process underlying Theorem~\ref{thm2} to construct the desired convex $4$-polytope $P$. In other words, we get rid of all excess combinatorial symmetries outside of $\Gamma$ while preserving each element of $\Gamma$ as a geometric symmetry for $P$, including in particular the involution $\sigma$ as the central symmetry for $P$. Thus $G(P) = \Gamma=\Gamma(P)$. This settles the case when $\Gamma$ is cyclic. (Note that we cannot work with $\mathbb{E}^2$ in place of $\mathbb{E}^4$ since the corresponding statement of Theorem~\ref{thm2} fails to be true for $n=2$.)

If $\Gamma$ is abelian but not cyclic, then, by the fundamental theorem of abelian groups, we can write $\Gamma$ as a direct product of $k+1$ abelian groups $\Gamma = \Gamma_1 \times \ldots \times \Gamma_k \times \Gamma_{k+1}$ for some $k\geq 1$, so that
\begin{itemize}
	\item $\Gamma_1, \ldots, \Gamma_k$ are cyclic and of even order, and 
	\item $\sigma = (\sigma_1, \ldots, \sigma_k, 1)$, where $\sigma_i$ is an involution in $\Gamma_i$ for all $1 \le i \le k$.
\end{itemize}
The idea is to manufacture a suitable polytope for each direct factor of $\Gamma$ and then combine these polytopes into a single polytope for $\Gamma$ itself. 

We know from the above that for each direct factor $\Gamma_i$, with $1 \le i \le k$, there is a centrally symmetric 4-polytope $P_i$ in $\mathbb{E}^4$ such that $G(P_i) = \Gamma_i=\Gamma(P_i)$ and $\sigma_i$ is the central symmetry for~$P_i$. Consider the cartesian product polytope $P' := P_1 \times \ldots \times P_k$ in $\mathbb{E}^{4k}$, whose vertex set is the cartesian product of the vertex sets of the component polytopes. Clearly, $P'$ is a centrally symmetric $4k$-polytope, and the direct product $\Gamma':=\Gamma_1 \times \ldots \times \Gamma_k$ acts on $P'$ as a group of symmetries such that each factor $\Gamma_i$ acts on the ambient $4$-dimensional subspace of the component polytope $P_i$. Under this action, $(\sigma_1, \ldots, \sigma_k)$ is the central symmetry for $P'$.

For the last direct factor, $\Gamma_{k+1}$, we embed $\Gamma'':=\Gamma_{k+1}$ into a symmetric group, $S_{l+1}$ for some $l$, and then take a regular $l$-simplex $P''$ in $\mathbb{E}^l$ centered at the origin. Then $\Gamma''$ is a (generally proper) subgroup of $G(P'')=\Gamma(P'')=S_{l+1}$. Any excess  symmetries that $P''$ might have, will be trimmed at a later stage. 

We now combine these polytopes. Set $d:=4kl$. Let $V$ denote the set of points in 
$\mathbb{E}^{d}=\mathbb{E}^{4k}\otimes\mathbb{E}^{l}$ of the form $u \otimes v$, where $u$ and $v$ are vertices of $P'$ and $P''$, respectively, and $\otimes$ denotes the standard tensor product (given by $u\cdot v^T$ if $u$ and $v$ are viewed as column vectors). Then $V$ is a centrally symmetric point set, since the vertex set of $P'$ is centrally symmetric and $(-u)\otimes v = -u\otimes v$. Hence the convex hull of $V$ in $\mathbb{E}^{d}$ is a centrally symmetric convex $d$-polytope $P$. Note that for the central symmetry of $P$ it is not required that  $P''$ is centrally symmetric. 

By construction, the actions of $\Gamma'$ on $P'$ and $\Gamma''$ on $P''$ induce an action of $\Gamma=\Gamma'\times\Gamma''$ on $P$ as a group of geometric symmetries. Thus $\Gamma$ is a subgroup of $G(P)$. If we write the given involution $\sigma$ of $\Gamma$ in the form $\sigma=(\sigma',\sigma'')$ with $\sigma'=(\sigma_{1},\ldots,\sigma_{k})\in\Gamma'$ and $\sigma'':=1\in\Gamma''$, then under this action, $\sigma$ maps each vertex $u \otimes v$ of $P$ to $(-u)\otimes v = -u\otimes v$ and thus acts on $P$ as central symmetry $-\rm{id}$, as desired.

In the final step, if $P$ has any extra symmetries outside of $\Gamma$ (as will usually be the case), we can trim them down using Theorem \ref{thm2}. This finally produces the desired centrally symmetric polytope.
\end{proof}

For cyclic groups (of even order), the construction underlying Theorem~\ref{thm3} produced convex polytopes in dimension $4$. The reader might wonder if a suitable geometric representation of these groups in 3-space $\mathbb{E}^3$ can not also give a convex 3-polytope. As the following theorem shows, the answer is negative for many abelian groups. Dimension $4$ is optimal in many cases.

\begin{theorem}
\label{thm4}
If $\Gamma = C_{4m}=\langle\gamma\rangle$ for some $m\geq 1$, and $\sigma:=\gamma^{2m}$, then there is no centrally symmetric 3-polytope $P$ in $\mathbb{E}^3$ such that $\Gamma(P)=\Gamma$ and $\sigma$ is realized as the central symmetry of $P$.
\end{theorem}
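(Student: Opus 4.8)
The plan is to argue by contradiction: suppose such a centrally symmetric $3$-polytope $P$ exists with $\Gamma(P) = C_{4m}$ and with $\sigma = \gamma^{2m}$ acting as $x \mapsto -x$. The first step is to recall the classical structure theory for automorphism groups of convex $3$-polytopes, equivalently of $2$-spheres: by Mani's theorem every such polytope can be realized so that $\Gamma(P)$ acts by isometries, and the finite subgroups of $O(3)$ that can arise are highly restricted (cyclic $C_n$, dihedral $D_n$, and the rotation/reflection groups of the Platonic solids, possibly extended by $-\mathrm{id}$). So we may assume $\Gamma = C_{4m}$ is realized as a group of isometries of $\mathbb{E}^3$ fixing the centroid of $P$, with $\sigma$ acting as $-\mathrm{id}$.

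The heart of the argument is then a parity/orientation obstruction. The key observation is that $-\mathrm{id}$ on $\mathbb{E}^3$ is \emph{orientation-reversing} (its determinant is $(-1)^3 = -1$), so $\sigma$ is an orientation-reversing isometry. Now examine the cyclic group $C_{4m} = \langle\gamma\rangle$ acting faithfully on $\mathbb{E}^3$ by isometries. A finite cyclic group of orientation-preserving isometries of $\mathbb{E}^3$ is a group of rotations about a common axis; a finite cyclic group containing an orientation-reversing element is either generated by a rotary reflection (an improper rotation) or contains $-\mathrm{id}$. One then checks which power of the generator can equal $-\mathrm{id}$: if $\gamma$ is a rotary reflection by angle $2\pi k/(4m)$ about some axis, its even powers are rotations and its odd powers are rotary reflections, so $-\mathrm{id}$ (which is the rotary reflection by angle $\pi$, i.e.\ a point reflection, and is central) can only be an \emph{odd} power of $\gamma$ when $\gamma$ generates the full group $\langle -\mathrm{id}\rangle \times (\text{rotations})$. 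Pinning this down: the cyclic subgroup of $O(3)$ of order $4m$ containing $-\mathrm{id}$ as its element of order $2$ must have $-\mathrm{id}$ sitting in the unique index-$2$ subgroup only if that subgroup has even order, i.e.\ only if $-\mathrm{id} = \gamma^{j}$ with $j$ having the same $2$-adic valuation as $2m$. Since we are told $\sigma = \gamma^{2m}$ and $4m = 2\cdot 2m$, the element $\sigma$ is the unique involution of $C_{4m}$, but one must verify it \emph{cannot} be $-\mathrm{id}$ given the constraint that $C_{4m}$ acts faithfully on $\mathbb{E}^3$: a faithful cyclic action of order $4m$ on $\mathbb{E}^3$ must be by rotations about an axis together with (possibly) the reflection in a plane, and in such a group $-\mathrm{id}$ arises as $\gamma^{2m}$ precisely when $\gamma$ is a rotary reflection of order $4m$ — but then $\gamma^{2m}$, being an even power, is a \emph{rotation}, not $-\mathrm{id}$. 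This contradiction is the crux.

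More carefully, I would organize the isometry-group computation as follows. A faithful orthogonal representation of $C_{4m}$ on $\mathbb{R}^3$ decomposes $\mathbb{R}^3$ into irreducible pieces; the real irreducibles of $C_{4m}$ are the trivial one, the sign representation (present since $4m$ is even), and $2$-dimensional rotation representations by angles $2\pi k/(4m)$. For the action to be faithful we need a $2$-dimensional rotation block by a primitive angle $2\pi/(4m)$ (if $m\ge 1$ and $4m>2$), which uses up $2$ of the $3$ dimensions; the remaining $1$-dimensional block is either trivial or sign. In the trivial case $\gamma$ acts as a rotation by $2\pi/(4m)$ about an axis, so $\gamma^{2m}$ is a rotation by $\pi$, which is \emph{not} $-\mathrm{id}$ on $\mathbb{R}^3$ (it fixes the axis pointwise). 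In the sign case $\gamma$ acts as (rotation by $2\pi/(4m)$) $\times$ ($-1$ on the axis), i.e.\ a rotary reflection of order $4m$; then $\gamma^{2m} = $ (rotation by $\pi$) $\times$ ($+1$ on the axis), again a half-turn, \emph{not} $-\mathrm{id}$. In either case $\sigma = \gamma^{2m} \ne -\mathrm{id}$, contradicting the assumption that $\sigma$ realizes the central symmetry. Hence no such $P$ exists. I would also dispatch the tiny case $m$ small (e.g.\ $4m = 4$) by the same computation, noting $C_4$ acting faithfully on $\mathbb{E}^3$ has its square acting as a half-turn, never $-\mathrm{id}$.

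The main obstacle I anticipate is the appeal to realizability: Theorem~\ref{thm4} is a statement about \emph{combinatorial} automorphism groups of $3$-polytopes, so to invoke the $O(3)$ classification I must first pass from an abstract combinatorial automorphism group to a geometric one. This is legitimate for $3$-polytopes by the classical result (essentially Mani, building on Steinitz) that every $3$-connected planar graph — equivalently every $3$-polytope — admits a realization in which the full combinatorial automorphism group is induced by isometries; so if $\Gamma(P) = C_{4m}$ abstractly, there is a convex $3$-polytope $P'$ isomorphic to $P$ with $G(P') = \Gamma(P') = C_{4m}$ acting orthogonally, and moreover one can arrange that the combinatorial involution corresponding to $\sigma$ is carried to whatever isometric involution it induces. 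The subtle point is matching: one must ensure the \emph{specific} abstract involution $\sigma = \gamma^{2m}$ that is assumed to act combinatorially as the antipodal map on $\mathrm{bd}(P)$ would then have to act as $-\mathrm{id}$ on $P'$ (an antipodal combinatorial automorphism of a polytopal $2$-sphere, realized isometrically, must be $-\mathrm{id}$ since it is a fixed-point-free orientation-reversing involution of $\mathbb{S}^2$, and $-\mathrm{id}$ is the only such element of $O(3)$). Once that identification is made, the representation-theoretic computation above closes the argument. So the proof skeleton is: (1) realize $C_{4m}$ orthogonally on $\mathbb{E}^3$; (2) observe the combinatorial central symmetry forces $\sigma \mapsto -\mathrm{id}$; (3) enumerate faithful orthogonal $C_{4m}$-actions on $\mathbb{R}^3$ and compute $\gamma^{2m}$ in each, finding it is always a half-turn and never $-\mathrm{id}$; (4) contradiction.
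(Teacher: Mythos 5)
Your proof is correct, but it takes a noticeably heavier route than the paper's. The paper avoids any appeal to realizability: it simply observes that a combinatorial automorphism $\gamma^m$ of $P$ induces a homeomorphism $\lambda$ of $\partial P \cong \mathbb{S}^2$ with $\lambda^2 = \sigma = -\operatorname{id}$, so $\deg(\lambda^2) = (\deg\lambda)^2 \ge 0$, while $\deg(-\operatorname{id}) = (-1)^{2+1} = -1$ on $\mathbb{S}^2$ --- a one-line contradiction using nothing beyond topological degree. You instead invoke Mani's theorem to pass to a realization $P'$ on which all of $C_{4m}$ acts orthogonally, carefully argue that the combinatorial $\sigma$ must still be carried to $-\operatorname{id}$ on $P'$ (correct: $\sigma$ is a fixed-point-free, orientation-reversing involution of $\mathbb{S}^2$ and $-\operatorname{id}$ is the only such element of $O(3)$), and then enumerate the faithful orthogonal $C_{4m}$-representations of $\mathbb{R}^3$ to verify that $\gamma^{2m}$ is always a half-turn. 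This works, and the classification of finite cyclic subgroups of $O(3)$ is instructive, but it uses substantially more machinery than necessary --- indeed once you have invoked Mani and landed in $O(3)$, you could replace your representation-theoretic case analysis by the single observation that $\det(\gamma^m)^2 \ge 0 \ne -1 = \det(-\operatorname{id})$, which is the determinant variant the paper mentions in the remark following Theorem~\ref{thm4}. The upshot is that the paper's degree argument exposes the obstruction as purely topological (it would apply to any combinatorial $2$-sphere, not just polytopal ones and without realizability input), whereas yours reduces to a geometric/representation-theoretic computation after a non-trivial realizability step.
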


\begin{proof} 
Suppose to the contrary that such a $3$-polytope $P$ exists. Then, since $\partial P$ is homeomorphic to $\mathbb{S}^2$, we can view $\Gamma$ as a group of homeomorphisms of $\mathbb{S}^2$. In particular, $\lambda := \gamma^m$ is a homeomorphism of $\mathbb{S}^2$ with $\lambda^2 = \sigma=-\operatorname{id}$ and thus its topological degree must be positive. On the other hand, the topological degree of the homeomorphism $-\operatorname{id}$ of the $k$-sphere $\mathbb{S}^k$ is $(-1)^{k+1}$, which is $-1$ when $k=2$. This leaves no possibility for the topological degree of $\lambda$. Thus $P$ cannot exist (and dimension $4$ is optimal if $\Gamma = C_{4m}$ and $\sigma:=\gamma^{2m}$).
\end{proof}

Note that if in Theorem~\ref{thm4} we had insisted on achieving $G(P)=\Gamma$ (rather than $\Gamma(P)=\Gamma$), we could have argued similarly by using the determinant of linear mappings (rather than the topological degree of homeomorphisms) to rule out the existence of $P$. In fact, the determinant of $\lambda^2$ would have to be positive, but the central inversion $-\operatorname{id}$ has determinant~$-1$ in dimension $3$.

On the other hand, for cyclic groups of the form $\Gamma = C_{2m}=\langle\gamma\rangle$ with $m$ odd, and $\sigma:=\gamma^{m}$, we can indeed find a convex 3-polytope in $\mathbb{E}^3$ such that $\Gamma(P)=\Gamma$ and $\sigma$ is realized as the central symmetry of~$P$. This can be obtained as follows. Consider a bipyramid $P'$ in $\mathbb{E}^3$ over a regular $2m$-gon in the $xy$-plane centered at the origin $o$, where the two apices lie symmetrically on the $z$-axis on different sides of the $xy$-plane. Clearly, $P'$ is invariant under the rotatory reflection $\gamma$ of order $2m$ which is the product of the rotation by $\pi/m$ about the $z$-axis and the reflection in the $xy$-plane. Thus $C_{2m}=\langle\gamma\rangle\leq G(P')$ and $\gamma^{m}=-\operatorname{id}$. Note that $G(P')$ is strictly larger than $C_{2m}$, since it also contains the reflection in the $xy$-plane but $C_{2m}$ does not. (In fact, $G(P')\cong D_{2m}\times C_2$.) Thus $P'$ itself does not have the required properties. However, a simple application of Theorem \ref{thm2} allows us to find a polytope $P$ by getting rid of the additional symmetries while preserving the action of $C_{2m}$. Alternatively, we can construct a polytope $P$ directly from $P'$ by attaching sufficiently thin pyramids to the facets of $P'$ in one facet orbit of $P'$ under $C_{2m}$.

\section{Some open problems}\label{section-openprobs}

Our previous discussion invites a number of open problems concerning the dimension of polytopes with preassigned symmetry groups or automorphism groups. Usually, given the group $\Gamma$ the interest is in finding polytopes of small dimension realizing $\Gamma$.  After the first version of this manuscript was uploaded to public repositories, independently of our work the three open questions below have been answered affirmatively \cite{CLS}.  We present the questions here since they may lead to more directions of research.

For a finite group $\Gamma$, we define the (combinatorial) {\it convex polytope dimension} of $\Gamma$, denoted $\operatorname{cpd}(\Gamma)$, as the smallest dimension $d$ for which there exists a convex $d$-polytope $P$ whose combinatorial automorphism group is $\Gamma$, that is, $\Gamma(P) = \Gamma$. Note that the results of \cite{SWil,Do1} are saying that for every finite group $\Gamma$, we have $\operatorname{cpd}(\Gamma)<\infty$. 

Similarly, the {\it geometric convex polytope dimension} of $\Gamma$, denoted $\operatorname{gcpd}(\Gamma)$, is defined to be the smallest dimension $d$ for which there is a convex $d$-polytope $P$ whose geometric symmetry group is $\Gamma$, that is, $G(P) = \Gamma$. The results of \cite{Do1} also imply $\operatorname{gcpd}(\Gamma)<\infty$. 

\begin{open}
For each $n$, is there a finite group $\Gamma_n$ such that $\operatorname{cpd}(\Gamma_n) \ge n$?
\end{open}

\begin{open}
For each $n$, is there a finite group $\Gamma_n$ such that $\operatorname{gcpd}(\Gamma_n) \ge n$?
\end{open}

\begin{open}
\label{open3}
Does Theorem \ref{thm3} hold for non-abelian groups $\Gamma$ and central involutions~$\sigma$ of $\Gamma$? In other words, given a finite group $\Gamma$ of even order and a central involution $\sigma$ of $\Gamma$, is there a centrally symmetric convex polytope $P$ with $G(P)=\Gamma(P)=\Gamma$ such that $\sigma$ is realized as the central symmetry $-\operatorname{id}$ of $P$?
\end{open}

Note that the proof of Theorem \ref{thm3} carries over to finite groups of the form $\Gamma = \Gamma_1 \times \Gamma_2$ where $\Gamma_1$ is abelian, and central involutions of $\Gamma$ of the form $\sigma = (\sigma_1, 1)$ where $\sigma_1$ is a central involution of~$\Gamma_1$.
\medskip

\noindent
{\bf Acknowledgment.} The authors would like to thank the anonymous referees for their valuable comments and suggestions that have improved the paper.

\bibliographystyle{spmpsci}      

\end{document}